\newcommand{\blind}{0}
\newtheorem{defi}{Definition}
\newtheorem{lemma}[defi]{Lemma}
\newtheorem{remark}[defi]{Remark}
\newtheorem{theorem}[defi]{Theorem}
\newtheorem{example}[defi]{Example}
\newcommand{\E}{{\mathbb{E}}}
\newcommand{\1}{\mbox{$\mathbb{I}$}}
\newcommand{\lebesgue}{\ensuremath{\lambda\!\!\!\;\!\lambda}}
\begin{document}

\def\spacingset#1{\renewcommand{\baselinestretch}%
{#1}\small\normalsize} \spacingset{1}


\if0\blind
{
  \title{\bf The false discovery rate (FDR) of multiple tests in a class room lecture}
  \author{Benditkis Julia\thanks{
    The authors gratefully acknowledge \textit{to SAW-Project "Multiplizität, Modellvalidierung und Reproduzierbarkeit in hochdimensionalen Microarray-Daten" and to Deutsche Forschungsgemeinschaft (DFG)}}\hspace{.2cm}\\
    Heinrich-Heine University D\" usseldorf,\\
    Heesen Philipp\\
    Heinrich-Heine University D\" usseldorf\\
    and \\
   Janssen Arnold \\
    Heinrich-Heine University D\" usseldorf.}
  \maketitle
} \fi

\if1\blind
{
  \bigskip
  \bigskip
  \bigskip
  \begin{center}
    {\LARGE\bf The False Discovery Rate (FDR) of Multiple Tests in a Class Room Lecture.}
\end{center}
  \medskip
} \fi

\bigskip
\begin{abstract}
Multiple tests are designed to test a whole collection of null hypotheses simultaneously. Their quality is often judged by the false discovery rate (FDR), i.e. the expectation of the quotient of the number of false rejections divided by the amount of all rejections. The widely cited Benjamini and Hochberg (BH) step up multiple test controls the FDR under various regularity assumptions. In this note we present a rapid approach to the BH step up and step down tests. Also sharp FDR inequalities are discussed for dependent p-values and examples and counter-examples are considered. In particular, the Bonferroni bound is sharp under dependence for control of the family-wise error rate.
\end{abstract}

\noindent%
{\it Keywords:}  False Discovery Rate (FDR), multiple testing, Benjamini and Hochberg Theorem, step up test, step down test.\newline

MSC classes: \	{\bf 62G10}\newline

\vfill

\newpage
\spacingset{1.45} 
\doublespacing

\section{Introduction}
The pioneer step up multiple test of Benjamini and Hochberg (1995) is a common and widely applied tool to bound the FDR. Up to now it is cited more than 28000 times. For a survey about multiple testing we refer to Pigeot (2000) and Dudoit and van der Laan (2008) who also sketched applications for instance in the analysis of genome data. Below a very rapid approach to Benjamini and Hochberg type tests is presented. The technical details are mostly special cases of the recent work of Heesen and Janssen (2015, 2016), Benditkis (2015) and others, where additional literature is discussed. We learned from the previous work of Benjamini and Yekutieli (2001), Finner and Roters (2001), Sarkar (2002) and Blanchard et al. (2014). A multiple testing problem consists of $m$ null hypotheses 
$
(H_1,p_1), ..., (H_m,p_m)
$
with associated p-values $p_i, \ i=1,...,m.$ Assume that all p-values arise from the same experiment given by one data set, where each $p_i$ can be used for testing the traditional null $H_i$. The p-value vector
$
p=(p_1,...,p_m)\in[0,1]^m
$
is a random variable based on an unknown distribution $P.$ Recall that simultaneous inference can be established by so called multiple tests $\phi=\phi(p),$
$
\phi=(\phi_1,...,\phi_m):[0,1]^m\rightarrow \{0,1\}^m,
$
which rejects the null $H_i$ iff $\phi_i(p)=1$.\newline
Below we are mainly concerned with the well established basic independence (BI) assumptions (BI) (1)-(3).
\begin{itemize}
\item[(BI)(1)] The set of hypotheses can be divided in the disjoint union $I_0\bigcup I_1=\{1,...,m\}$ of unknown portions of true null $I_0$ and false null $I_1,$ respectively.
\item[(BI)(2)] The vectors of p-values $\left(p_i\right)_{i\in I_0}$ and $\left(p_i\right)_{i\in I_1}$ are independent, where each dependence structure is allowed for the ``false" p-values within $\left(p_i\right)_{i\in I_1}.$
\item[(BI)(3)] The p-values $\left(p_i\right)_{i\in I_0}$ of true null hypotheses are independent and stochastically larger (or equal) compared to the uniform distribution on $[0,1],$ i.e., $P(p_i\leqslant x)\leqslant x$ for all $x\in[0,1]$ and $i\in I_0.$
\end{itemize}
If in addition to (BI)(3) the p-values $(p_i)_{i \in I_0}$ are i.i.d. uniformly distributed on $[0,1]$ then we call it BI model with uniform true p-values. \newline
Observe that (BI)(3) allows p-values related to conservative single tests for $H_i, i\in I_0.$\newline
The amount $m_0:=\# I_0$ ($m_1:=\# I_1=m-m_0$) of true null (false null) is fixed but unknown. Given a multiple test $\phi$ the integers 
\begin{align*}
R=R(p):=\#\{i: \ \phi_i(p)=1\}, \ V=V(p):=\#\{i\in I_0 : \ \phi_i(p)=1\}
\end{align*}
count the numbers of all rejected null and falsely rejected null, respectively.\newline
It is well known that the control of the type 1 error probability of a multiple test by the family-wise error rate $\text{FWER}:=P\left(V>0\right)$ is often much too restrictive. To overcome this difficulty the false discovery rate (FDR) was developed as an error control criterion in the 90's. It is defined as the expectation of the false discovery proportion
\begin{align}\label{deffdr}
\text{FDR}:=\E\left[\frac{V}{\max(R,1)}\right]=\sum\limits_{i\in I_0}\E\left[\frac{\phi_i}{\max(R,1)}\right].
\end{align}
It is the aim to calculate the FDR of a multiple test and to control the FDR by a pre-specified level $\alpha\in(0,1),$ i.e. $\text{FDR}\leqslant \alpha.$
\section{Results for step up (SU) tests} 
Consider the order statistics  $ p_{1:m}\leqslant p_{2:m}\leqslant \ldots \leqslant p_{m:m}$ of the underlying p-value vector $p$. They are compared with given critical values $0=\alpha_0<\alpha_1\leqslant \alpha_2\leqslant \ldots \leqslant \alpha_m<1.$ The appertaining step up multiple test rejects $H_i$ whenever $p_i\leqslant \alpha_R.$ Thereby, $R$ is the number of rejections defined by
\begin{align}\label{defSU}
R:=\max\{j: \ p_{j:m}\leqslant \alpha_j\},
\end{align}
and $\max\{\emptyset\}=0.$
The famous so called Benjamini and Hochberg SU test is given by linear critical values $\alpha_i=b_i$ with
\begin{align}\label{BHcv}
b_i:=\frac{i\alpha}{m}, \ i=0,...,m.
\end{align}
Under the BI assumptions the next Theorem, called BH Theorem, yields its FDR control, see Benjamini and Hochberg (1995), Benjamini  and Yekutieli (2001) and Finner and Roters (2001). We present a rapid proof which summarizes and focuses various arguments given in the literature. 
\begin{theorem}[\textit{\cite{b_h_95}}]
Consider the BH step up test given by critical values (\ref{BHcv}). Under the basic independence assumptions (BI)(1)-(3) we have the inequality $\text{FDR}\leqslant \frac{m_0}{m}\alpha$ and equality under the basic independence model with uniform true p-values.
\end{theorem}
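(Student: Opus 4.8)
The plan is to start from the additive decomposition of the FDR recorded in (\ref{deffdr}),
\[\text{FDR}=\sum_{i\in I_0}\E\left[\frac{\phi_i}{\max(R,1)}\right],\]
and to bound each summand separately by $\alpha/m$; summing over the $m_0$ true nulls then yields $\text{FDR}\leqslant\frac{m_0}{m}\alpha$. For the step up test $\phi_i=\1\{p_i\leqslant\alpha_R\}=\1\{p_i\leqslant b_R\}$, and since $\phi_i=1$ forces $R\geqslant 1$, I would first rewrite each summand as $\E\left[\frac{\1\{p_i\leqslant b_R\}}{R};\ \phi_i=1\right]$, so that the awkward $\max(R,1)$ disappears.

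The central device is a leave-one-out modification that decouples $p_i$ from the number of rejections. For fixed $i$, let $\tilde R_i$ denote the number of rejections of the step up test applied to $p$ after replacing the coordinate $p_i$ by $0$. The key lemma to establish is: on the event $\{\phi_i=1\}$ one has $R=\tilde R_i$, and moreover $\{\phi_i=1\}=\{p_i\leqslant b_{\tilde R_i}\}$, where crucially $\tilde R_i$ is a function of $(p_j)_{j\neq i}$ alone and is independent of the actual value of $p_i$. I expect this lemma to be the main obstacle: its proof is pure order-statistic bookkeeping and hinges on the monotonicity $b_1\leqslant\ldots\leqslant b_m$. Indeed, if $p_i\leqslant b_R$ then $p_i$ lies among the $R$ smallest p-values, and lowering it to $0$ shifts the smaller order statistics up by one rank while each inequality $p_{j:m}\leqslant b_j$ is preserved precisely because $b_{j-1}\leqslant b_j$; hence the rejection count is unchanged, and the same monotonicity argument run in reverse (raising $p_i$ within $[0,b_{\tilde R_i}]$) gives the claimed equality of events.

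Granting the lemma, each summand becomes $\E\left[\frac{\1\{p_i\leqslant b_{\tilde R_i}\}}{\tilde R_i}\right]$, where now $\tilde R_i\geqslant 1$ always since $p_i=0\leqslant b_1$. Under (BI)(2)--(3) the p-value $p_i$, $i\in I_0$, is independent of the whole family $(p_j)_{j\neq i}$ and hence of $\tilde R_i$; conditioning on $(p_j)_{j\neq i}$ and using the sub-uniformity $P(p_i\leqslant x)\leqslant x$ gives
\[\E\left[\frac{\1\{p_i\leqslant b_{\tilde R_i}\}}{\tilde R_i}\right]=\E\left[\frac{P(p_i\leqslant b_{\tilde R_i}\mid\tilde R_i)}{\tilde R_i}\right]\leqslant\E\left[\frac{b_{\tilde R_i}}{\tilde R_i}\right].\]
Here the specific linear choice $b_k=k\alpha/m$ enters decisively: the ratio $b_k/k=\alpha/m$ is constant in $k$, so $b_{\tilde R_i}/\tilde R_i=\alpha/m$ irrespective of $\tilde R_i$, and the bound collapses to the value $\alpha/m$. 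Summing over the $m_0$ indices $i\in I_0$ proves the inequality.

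Finally, for the equality assertion in the BI model with uniform true p-values, I would simply track that the sole inequality used above is $P(p_i\leqslant x)\leqslant x$, which becomes the identity $P(p_i\leqslant x)=x$ when $p_i$ is uniform on $[0,1]$. Combined with the exact event identity $\{\phi_i=1\}=\{p_i\leqslant b_{\tilde R_i}\}$ from the key lemma, every summand then equals exactly $\alpha/m$, whence $\text{FDR}=\frac{m_0}{m}\alpha$.
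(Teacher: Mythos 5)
Your proposal is correct and follows essentially the same route as the paper: your leave-one-out vector with $p_i$ replaced by $0$ is exactly the paper's $p^{(i)}$, your key lemma ($R=\tilde R_i$ on the rejection event and $\{\phi_i=1\}=\{p_i\leqslant b_{\tilde R_i}\}$) is what the paper encodes via $R(p)=R(p^{(i)})$ together with $P\bigl(p_i\in(\alpha_{R(p)},\alpha_{R(p^{(i)})}]\bigr)=0$ for SU tests, and the Fubini/independence step plus the constancy of $b_k/k=\alpha/m$ are identical. Your write-up is, if anything, more explicit about the order-statistic bookkeeping that the paper compresses into two sentences.
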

\begin{proof}(along \cite{h_j_15.1}):
Introduce for fixed $i\in I_0$ the vector $p^{(i)}=(p_1,...,0,...,p_{m})$ which coincides up to the position $i$ with $p$ and $p_i$ is replaced by $0.$ If $H_i$ is rejected its p-value only obeys the constructions $p_i\leqslant \alpha_{R(p)}=\frac{R(p)}{m}\alpha.$ On the set $\{p_i\leqslant \alpha_{R(p)}\}$ we may therefore substitute $p$ by $p^{(i)}$ and $R(p)=R(p^{(i)})$ holds. In addition we have in all cases $P(p_i\in(\alpha_{R(p)},\alpha_{R(p^{(i)})}])=0$ for an SU test. Since $p_i$ and $p^{(i)}$ are independent we may apply Fubini's Theorem
\begin{align}\label{firsteqfdr}
\E\left[\frac{\1(p_i\leqslant \alpha_{R(p)})}{\max(R(p),1)}\right]&=\E\left[\frac{\1(p_i\leqslant \alpha_{R(p^{(i)})})}{R(p^{(i)})}\right]\\
&\leqslant \E\left[\frac{\alpha_{R(p^{(i)})}}{R(p^{(i)})}\right]=\frac{\alpha}{m},
\end{align}
where first the integration is done via $p_i$ and equality holds for uniformly distributed $p_i.$ Definition (\ref{deffdr}) implies the result.
\end{proof}
The presented method of proof has further applications which applies to least favorable configurations of ``false p-values" for the FDR.
\begin{theorem}[Benjamini and Yekutieli (2001)]\label{beyek}
Consider a SU multiple test based on critical values $\left(\alpha_i\right)_{i\leqslant m}.$ Under the basic independence assumptions with uniform true p-values the following assertions hold.
\begin{itemize}
\item[(a)] Suppose that $i\mapsto \frac{\alpha_i}{i}$ is non-decreasing. Then the FDR is non-increasing in each $p_j$, $j \in I_1$. 
\item[(b)] Let $i\mapsto \frac{\alpha_i}{i}$ be non-increasing. Then the FDR is non-decreasing in each $p_j$, $j \in I_1$. 
\end{itemize}
\end{theorem}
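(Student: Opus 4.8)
The plan is to turn the FDR into an exact functional of the step-up count $R$ and then play off two monotonicities against each other: the behaviour of $R$ under coordinatewise enlargement of the p-value vector, and the assumed monotonicity of $k\mapsto\alpha_k/k$. The whole argument is the method of Theorem 1 pushed one step further, keeping general critical values rather than specializing to the BH values.

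First I would rerun the computation behind (\ref{firsteqfdr}) for general critical values. Under the basic independence model with \emph{uniform} true p-values the stochastic-dominance inequality (BI)(3) becomes an equality, and since $p_i$ is independent of $p^{(i)}$ while $R(p^{(i)})\geqslant 1$ always (replacing $p_i$ by $0$ forces $p^{(i)}_{1:m}=0\leqslant\alpha_1$), integrating out $p_i$ gives, for each $i\in I_0$,
\begin{align*}
\E\left[\frac{\1(p_i\leqslant\alpha_{R(p)})}{\max(R(p),1)}\right]=\E\left[\frac{\alpha_{R(p^{(i)})}}{R(p^{(i)})}\right]=\E\left[g\bigl(R(p^{(i)})\bigr)\right],\qquad g(k):=\frac{\alpha_k}{k}.
\end{align*}
Summing over $i\in I_0$ and invoking (\ref{deffdr}) yields the exact representation $\text{FDR}=\sum_{i\in I_0}\E[g(R(p^{(i)}))]$, which is the identity I would build everything on.

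Second I would record the coordinatewise monotonicity of the step-up count. If $q\leqslant q'$ holds coordinatewise then $q_{k:m}\leqslant q'_{k:m}$ for every $k$, so $\{k:q'_{k:m}\leqslant\alpha_k\}\subseteq\{k:q_{k:m}\leqslant\alpha_k\}$ and hence $R(q')\leqslant R(q)$ by (\ref{defSU}); that is, $R$ is non-increasing in each argument. Since for $j\in I_1$ the modified vector $p^{(i)}$ still carries $p_j$ in its $j$-th slot (recall $i\in I_0$, so $i\neq j$), the map $p_j\mapsto R(p^{(i)})$ is non-increasing for every fixed choice of the remaining coordinates. Finally I would combine the two ingredients by composition. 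By (BI)(2) the true p-values are i.i.d.\ uniform and independent of the false ones, so I may fix the false p-values and view the FDR as the expectation over the uniform true p-values of $\sum_{i\in I_0}g(R(p^{(i)}))$. In case (a), $g$ is non-decreasing, and composing it with the non-increasing map $p_j\mapsto R(p^{(i)})$ makes the integrand non-increasing in $p_j$; this survives taking the expectation and summing over $i\in I_0$, so the FDR is non-increasing in $p_j$. Case (b) is verbatim the same with both monotonicities reversed.

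The step I expect to be the genuine obstacle is the first: one must verify that the bound in the proof of Theorem 1 is an \emph{equality}, which relies squarely on the true p-values being exactly uniform (otherwise only a one-sided estimate survives), and one must confirm that pointwise monotonicity in the single coordinate $p_j$ is preserved when passing to the conditional expectation over the true p-values and then, if the $(p_j)_{j\in I_1}$ are treated stochastically, to the arbitrarily dependent false ones. Everything else is the routine order-statistic and monotone-composition bookkeeping sketched above; as a sanity check, the BH values $\alpha_k=k\alpha/m$ give $g\equiv\alpha/m$, so $g(R(p^{(i)}))$ is constant and the FDR does not depend on the false configuration at all, consistent with both (a) and (b).
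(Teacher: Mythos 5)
Your proof is correct and follows essentially the same route as the paper: both derive the exact representation $\text{FDR}=\sum_{i\in I_0}\E\bigl[\alpha_{R(p^{(i)})}/R(p^{(i)})\bigr]$ from (\ref{deffdr}) and the equality case of (\ref{firsteqfdr}) under uniform true p-values, then exploit that $R(p^{(i)})$ is coordinatewise non-increasing in each $p_j$, $j\in I_1$, and compose with the monotone map $k\mapsto\alpha_k/k$. You simply spell out the details (the order-statistic argument for the monotonicity of $R$, the conditioning on the false p-values) that the paper leaves implicit.
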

\begin{proof}
The combination of (\ref{deffdr}) and (\ref{firsteqfdr}) implies
\begin{align*}
\text{FDR}=\sum\limits_{i\in I_0}\E\left[\frac{\alpha_{R(p^{(i)})}}{R(p^{(i)})}\right].
\end{align*}
Note that whenever $p_j, \ j\in I_1,$ increases then $R(p^{(i)})$ is non-increasing for each $i\in I_0.$ The different requirements for $\frac{\alpha_i}{i}$ establish the results. 
\end{proof}
The BH Theorem is no longer true for arbitrary dependent p-values. \cite{h_j_15.1} derived an elementary example, see Example 3.1, for $m_0=m=2$ of dependent p-values given by a bivariate normal experiment. 
For $\alpha = \frac{1}{2}$ it has been shown that $FDR=\frac{7}{16}<\alpha$ for a specifically chosen positive correlation of the unterlying normal random variables and $FDR=\frac{9}{16}>\alpha$ for some negative correlation, respectively.   
\newline
There are several proposals for multiple tests which allow FDR control (i.e. $\text{FDR}\leqslant \alpha$) under general dependence models. The probably oldest one is the Bonferroni multiple test with constant critical values $\alpha_j=\frac{\alpha}{m}$, which rejects $H_i$ whenever $p_i\leqslant \frac{\alpha}{m}$ for the corresponding p-value.
\begin{example}[Sharpness of the Bonferroni bound]
For each $m=m_0$ there exist p-values with $\text{FDR}=\alpha$ for the Bonferroni multiple test. Consider any copula, i.e. a $m-$dimensional random variable $(U_1,...,U_m)$ with uniformly distributed marginals on $[0,1]$. We define uniformly on $(\frac{i-1}{m},\frac{i}{m})$ distributed variables by $U_i'=\frac{i-1+U_i}{m}$. Let $\sigma$ denote a uniformly distributed permutation of $1,...,m,$ independent of the $U'$s. Then
\begin{align}\label{pvalbon}
(p_1,...,p_m)=(U'_{\sigma(1)},...,U'_{\sigma(m)})
\end{align}
is a vector of p-values with uniformly on $[0,1]$ distributed marginals, which may represent $m=m_0$ true null. The $\text{FDR}$ of the Bonferroni test is given by 
 \begin{align*}
 &\text{FDR}=\text{FWER}=P\left(p_{1:m}\leqslant \frac{\alpha}{m}\right)=P\left(\bigcup\limits_{k=1}^{m}\left\{p_i\leqslant \frac{\alpha}{m} \right\}\right)\\
 &=\sum\limits_{k=1}^{m}P\left(\frac{U_k}{m}<\frac{\alpha}{m},\sigma(k)=1\right)=\sum\limits_{k=1}^{m}P(U_k\leqslant \alpha)P(\sigma(k)=1)=\alpha.
 \end{align*} 

\end{example}
\begin{remark}
The FDR bound $\alpha$ of the Bonferroni test is sharp for different correlated p-values. For $m=2$ the p-values (\ref{pvalbon}) may be positive or negative correlated since $\text{corr}(p_1,p_2)=\text{corr}(U_1,U_2)$ holds.
\end{remark}
Benjamini and Yekutieli (2001) proved that the SU test using critical values 
\begin{align}\label{boundby}
\alpha_i=i\alpha'/\sum\limits_{k=1}^{m}\frac{m}{k}
\end{align}
controls the FDR by $\alpha'$ under general dependence assumptions. If we put $\alpha:=\alpha'/\sum\limits_{k=1}^{m}\frac{1}{k}$ this result equivalently implies the FDR bound
\begin{align}\label{boundbh}
\text{FDR}\leqslant \min (\alpha\sum\limits_{k=1}^{m}\frac{1}{k},1)
\end{align}
for the BH test and sufficiently small $\alpha$ under dependence. \newline
The next lemma implies that the upper bound (\ref{boundbh}) is sharp for $m=2$ and $\alpha<\frac{2}{3}.$

\begin{lemma}
Consider a SU test with critical values $0<\alpha_1< \alpha_2< 1$ for $m=2.$ Under arbitrary dependence of on $[0,1]$ uniformly distributed $p$-values $p_1$ and $p_2$ we have
$\text{FDR}\leqslant\min(\alpha_1+\alpha_2,1).$ If $\alpha_1+\alpha_2<1$ holds the inequality is sharp.
\end{lemma}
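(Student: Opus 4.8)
The plan is to first collapse the false discovery rate into a single probability, then obtain the bound by a careful regrouping of events, and finally produce an explicit worst-case coupling for the sharpness claim.

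Since $p_1,p_2$ are both uniform true null $p$-values, $I_0=\{1,2\}$ and every rejection is false, so $V=R$. Hence from (\ref{deffdr})
\begin{equation*}
\text{FDR}=\E\!\left[\frac{R}{\max(R,1)}\right]=P(R\geq 1),
\end{equation*}
because the integrand equals $1$ on $\{R\geq1\}$ and $0$ otherwise. For $m=2$ the step up rule (\ref{defSU}) gives $R=2$ exactly on $\{p_{2:2}\leq\alpha_2\}$, $R=1$ exactly on $\{p_{2:2}>\alpha_2,\,p_{1:2}\leq\alpha_1\}$, and $R=0$ elsewhere, so that $P(R=2)=P(p_1\leq\alpha_2,p_2\leq\alpha_2)$ and $P(R=1)=P(p_1\leq\alpha_1,p_2>\alpha_2)+P(p_2\leq\alpha_1,p_1>\alpha_2)$.

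For the upper bound I would decompose $P(R\geq1)=P(R=2)+P(R=1)$ and regroup the three terms, rather than apply a naive union bound (which only yields the weaker $\alpha_2+2\alpha_1$). Using $\{p_1\leq\alpha_1\}\subseteq\{p_1\leq\alpha_2\}$ and the uniform marginal,
\begin{equation*}
P(R=2)+P(p_1\leq\alpha_1,p_2>\alpha_2)\leq P(p_1\leq\alpha_2,p_2\leq\alpha_2)+P(p_1\leq\alpha_2,p_2>\alpha_2)=\alpha_2,
\end{equation*}
while the remaining term obeys $P(p_2\leq\alpha_1,p_1>\alpha_2)\leq P(p_2\leq\alpha_1)=\alpha_1$. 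Adding the two, together with the trivial bound that a probability never exceeds $1$, yields $\text{FDR}\leq\min(\alpha_1+\alpha_2,1)$.

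The genuinely creative part is the sharpness statement: for $\alpha_1+\alpha_2<1$ I must build a dependence structure of two uniform marginals with $P(R\geq1)=\alpha_1+\alpha_2$. Tracing the two inequalities above, equality forces the bottom strip $\{p_2\leq\alpha_1\}$ to sit inside $\{p_1>\alpha_2\}$ and the middle strip $\{\alpha_1<p_1\leq\alpha_2\}$ to sit inside $\{p_2\leq\alpha_2\}$; I would derive these conditions first and let them guide the construction. I realize such a coupling by an interval exchange, setting $p_2=T(p_1)$ with $p_1\sim U[0,1]$ and $T$ a piecewise translation (hence measure preserving, so $p_2\sim U[0,1]$ too). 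Concretely, partition $[0,1]$ into the four blocks $[0,\alpha_1]$, $(\alpha_1,\alpha_2]$, $(\alpha_2,\alpha_2+\alpha_1]$, $(\alpha_2+\alpha_1,1]$ and let $T$ send them onto $(\alpha_2,\alpha_2+\alpha_1]$, $(\alpha_1,\alpha_2]$, $[0,\alpha_1]$, $(\alpha_2+\alpha_1,1]$ respectively; since $\alpha_1+\alpha_2<1$ these four target blocks tile $[0,1]$. On the first three source blocks one reads off $R=1$, $R=2$, $R=1$, and on the last $R=0$, whence $P(R\geq1)=\alpha_1+(\alpha_2-\alpha_1)+\alpha_1=\alpha_1+\alpha_2$. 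I expect this coupling to be the main obstacle: the upper bound is essentially bookkeeping once $V=R$ is noticed, whereas sharpness requires guessing the correct least favorable configuration, and the equality analysis is precisely what pins it down.
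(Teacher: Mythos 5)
Your proof is correct in substance, and both halves take a route that differs in detail from the paper's. For the upper bound, the paper partitions $\{R\geq 1\}$ into $A_1=\{p_1\leq\alpha_1\}$, $A_2=\{p_2\leq\alpha_1,\,p_1>\alpha_1\}$ and $C=\{\alpha_1<p_i\leq\alpha_2,\ i=1,2\}$, and bounds $P(C)\leq\alpha_2-\alpha_1$ via the rectangle inequality $P(p_1\in J_1,\,p_2\in J_2)\leq\min(\mathrm{length}(J_1),\mathrm{length}(J_2))$, arriving at $2\alpha_1+(\alpha_2-\alpha_1)$; your regrouping, which absorbs $\{R=2\}$ and $\{p_1\leq\alpha_1,\,p_2>\alpha_2\}$ into the single marginal event $\{p_1\leq\alpha_2\}$ and bounds the leftover term by $\alpha_1$, reaches the same constant using only disjointness and monotonicity --- equally elementary, just a different bookkeeping of the same three pieces. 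For sharpness, the paper randomizes: it prescribes the conditional law of $p_2$ given $p_1=x$ as uniform on $(1-\alpha_1,1]$, on $(\alpha_1,\alpha_2]$, or on $(0,\alpha_1]\cup(\alpha_2,1-\alpha_1]$, according to which strip $x$ lies in, and then verifies the marginal. You instead use the deterministic coupling $p_2=T(p_1)$ with $T$ a measure-preserving interval exchange; this realizes the same least favorable configuration (the bottom strip $\{p_1\leq\alpha_1\}$ pushed above $\alpha_2$, the middle strip kept inside $(\alpha_1,\alpha_2]$, the strip $\{p_1>\alpha_2\}$ partly mapped below $\alpha_1$), and it buys you an immediate verification that $p_2$ is uniform, at the cost of introducing ties $p_2=p_1$ on the middle block --- which, as you implicitly use, are harmless for a step up test since $R$ depends only on the order statistics. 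One genuine (if small) omission: the lemma as the paper proves it does not presuppose $I_0=\{1,2\}$; a uniform p-value may also belong to a false null, and the paper separately disposes of $m_0=1$ (where $\mathrm{FDR}\leq\alpha_2<\alpha_1+\alpha_2$ holds trivially), the case $m_0=0$ being vacuous. You should replace your opening assertion that both p-values are true nulls by one sentence covering these degenerate configurations; the heart of your argument, the $m_0=2$ case, then stands as is.
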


\begin{proof}
In case $m_0=1$ the inequality $\text{FDR}\leqslant \alpha_2$  holds obviously. \newline
Consider next $m_0=2.$ If $J_1, \ J_2$ are two intervals the probability 
\begin{align*}
P(p_1\in J_1, \ p_2\in J_2)\leqslant \min(\lebesgue(J_1),\lebesgue(J_2))
\end{align*}
is bounded by the minimal length of the intervals. Consider
\begin{align}\label{threeprob}
\text{FDR}= P(A_1)+P(A_2)+P(C)
\end{align}
based on the events
$A_1=\{p_1\leqslant \alpha_1\}$, $A_2=\{p_2\leqslant \alpha_1,p_1>\alpha_1\}$, $C=\{\alpha_1<p_i\leqslant\alpha_2, \ i=1,2\}.$
Thus,
$
\text{FDR}\leqslant 2\alpha_1 + (\alpha_2-\alpha_1)
$
proves the inequality.\newline
If $\alpha_1+\alpha_2<1$ holds, the following joint distribution of $(p_1,p_2)$ for $m_0=2$ yields the equality $\text{FDR}=\alpha_1+\alpha_2.$
Throughout let $Q_J$ denote the uniform distribution on a subinterval $J\subset [0,1].$ 
Let $p_1$ be any uniformly on $[0,1]$ distributed random variable. For each $x\in[0,1]$ the conditional distribution $P(p_2 \in \cdot \ | \  p_1=x)$ of $p_2$ given $p_1=x$ is specified as follows by uniform distributions on subintervals. Put $P(p_2 \in \cdot \ | \  p_1=x)$ equal to
\begin{align*}
Q_{(1-\alpha_1,1]} & \text{ \ if \ }    0\leqslant x \leqslant \alpha_1, \\
Q_{(\alpha_1,\alpha_2]} &\text{\ if \ } \alpha_1\leqslant x \leqslant \alpha_2,\\
Q_{(0,\alpha_1]\cup (\alpha_2,1-\alpha_1]} & \text{ \ if \ }    \alpha_2\leqslant x \leqslant 1.
\end{align*}
It is easy to see that the marginal distribution  of $p_2$ is uniform and FDR$=\alpha_1+\alpha_2$ by (\ref{threeprob}).
\end{proof}

{\bf Conclusion:} Due to the sharp FDR bounds for the Bonferroni test, the BH and Benjamini and Yekutieli SU tests, respectively,  further dependence assumptions are needed if the upper bounds have to be improved for step up tests.

\section{Results for step down (SD) multiple tests.}
Up to some modifications our method of proof applies to BH-type step down (SD) multiple tests. In contrast to SU tests, see (\ref{defSU}), the number of rejections is given by
\begin{align}\label{defSD}
R=\max\{j: \ p_{i:m}\leqslant \alpha_i \text{ \ for \ all \ }i\leqslant j\}
\end{align}
with $R=0$ if the present set is empty. The SD tests rejects all null hypotheses which belong to p-values $p_{i:m}$ for $i\leqslant R$. Let $t\mapsto \hat{F}_m(t)$ be the empirical distribution function of $p_1,...,p_m.$ Observe first that in case of SU tests the maximum (\ref{defSU}) only relies on so called inspection points $j\in\{ m\hat {F}_m(p_{i:m}): i=1,...,m\}$. This is not true for SD tests in general if tied p-values are present. However, the choice $\alpha_i=a_i$ defined below with new data dependent critical values restricts the comparison (\ref{defSD}) to the inspection points given by the indices $m\hat {F}_m(p_{i:m})$ for ties. In addition to the BH critical values $b_i$ in (\ref{BHcv}) introduce
\begin{align}\label{BHcvSD}
a_i:=b_{m\hat{F}_m(p_{i:m})} \text{ \ for \ } 1\leqslant i\leqslant m.
\end{align}

\begin{theorem} [\textit{SD-Theorem}]
Consider the SD test with $\left(\alpha_i\right)_{i=1,...,m}=\left(a_i\right)_{i=1,...,m}$ or $\left(\alpha_i\right)_{i=1,...,m}=\left(b_i\right)_{i=1,...,m}.$ Under BI assumptions we have the inequality
\begin{align*}
\text{FDR}\leqslant \alpha\frac{m_0}{m}.
\end{align*}
\end{theorem}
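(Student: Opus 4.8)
The plan is to imitate the proof of the BH Theorem, replacing the step up structure by the step down structure in the substitution argument. Writing the FDR as in (\ref{deffdr}), $\text{FDR}=\sum_{i\in I_0}\E[\phi_i/\max(R,1)]$, I would treat each summand separately and, for fixed $i\in I_0$, pass to the modified vector $p^{(i)}=(p_1,\ldots,0,\ldots,p_m)$ obtained by replacing $p_i$ by $0$. The decisive feature of (BI) that I would exploit is that $p_i$ is independent of $p^{(i)}$ (equivalently of $(p_j)_{j\neq i}$) by (BI)(2)--(3), which is exactly what makes the conditional Fubini step in (\ref{firsteqfdr}) available here as well. Since $p^{(i)}$ and the number of rejections $R(p^{(i)})=:R^{(i)}$ do not depend on $p_i$, the aim is to reduce $\E[\phi_i/\max(R,1)]$ to an integral of the form $\E[\alpha_{R^{(i)}}/R^{(i)}]$ and then to identify $\alpha_{R^{(i)}}/R^{(i)}$ with $\alpha/m$.

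First I would record the two order-statistic facts that make the SD substitution work. Because lowering a single p-value can only relax the conditions $p_{l:m}\leqslant\alpha_l$, rejection of $H_i$ is monotone in $p_i$: the rejection region of $H_i$ is an interval $\{p_i\leqslant\beta_i\}$ with a threshold $\beta_i=\beta_i(p^{(i)})$ depending only on the remaining coordinates. Next, on $\{\phi_i=1\}$ the hypothesis $H_i$ belongs to the $R$ smallest p-values, so $p_i=p_{k:m}$ for some $k\leqslant R$; replacing $p_i$ by $0$ leaves every order statistic in positions $>R$ unchanged and only decreases those in positions $\leqslant R$. Hence all $R$ conditions $p^{(i)}_{l:m}\leqslant\alpha_l$, $l\leqslant R$, persist, while the first violated condition $p_{R+1:m}>\alpha_{R+1}$ is inherited by $p^{(i)}$; therefore $R(p)=R^{(i)}$ holds throughout $\{p_i\leqslant\beta_i\}$. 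In particular the denominator is the constant $R^{(i)}\geqslant 1$ on the whole rejection event, so no measure-zero boundary correction is needed.

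With these facts the computation runs parallel to (\ref{firsteqfdr}): conditioning on $p^{(i)}$ and integrating over $p_i$,
\begin{align*}
\E\left[\frac{\phi_i}{\max(R,1)}\right]=\E\left[\frac{\1(p_i\leqslant\beta_i)}{R^{(i)}}\right]\leqslant\E\left[\frac{\beta_i}{R^{(i)}}\right]\leqslant\E\left[\frac{\alpha_{R^{(i)}}}{R^{(i)}}\right],
\end{align*}
where the first inequality uses $P(p_i\leqslant\beta_i\mid p^{(i)})\leqslant\beta_i$ from (BI)(3), and the second uses $\beta_i\leqslant\alpha_{R^{(i)}}$, valid because a rejected $H_i$ satisfies $p_i\leqslant p_{R:m}\leqslant\alpha_R$. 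For the linear values $\alpha_i=b_i$ one has $\alpha_{R^{(i)}}/R^{(i)}=\alpha/m$ immediately, and summing over $i\in I_0$ yields $\text{FDR}\leqslant m_0\alpha/m$.

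For the data dependent values $\alpha_i=a_i$ the same bound should follow once I show that the realized number of rejections is always an \emph{inspection point}, i.e. $m\hat{F}_m(p_{R:m})=R$: by (\ref{BHcvSD}) this gives $a_R=b_{m\hat{F}_m(p_{R:m})}=b_R=R\alpha/m$, so again $a_{R^{(i)}}/R^{(i)}=\alpha/m$. This is exactly where I expect the main obstacle. Under (BI)(3) ties among the true p-values are allowed, and for the plain values $b_i$ a boundary tie $p_{R:m}=p_{R+1:m}$ makes the identity of the rejected hypotheses---and hence $\phi_i$---ambiguous, which is what breaks the naive inspection-point reduction. The construction (\ref{BHcvSD}) is designed to repair this: I would verify that $a_i$ forces $p_{R:m}<p_{R+1:m}$ at the realized $R$ (otherwise $p_{R+1:m}=p_{R:m}\leqslant a_R=a_{R+1}$ would already permit one more rejection), so that $m\hat{F}_m(p_{R:m})=R$. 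The genuinely delicate point is that the $a_i$ are themselves functions of the data, so the substitution $p\mapsto p^{(i)}$ recomputes them; checking that the identity $R(p)=R^{(i)}$ and the threshold inequality $\beta_i\leqslant\alpha_{R^{(i)}}$ survive this recomputation is the step I would treat with the most care.
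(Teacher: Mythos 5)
Your proposal is correct and takes essentially the same approach as the paper's proof: the substitution $p\mapsto p^{(i)}$, the identity $R(p)=R(p^{(i)})$ on the rejection event, and conditioning on $p^{(i)}$ via Fubini to bound each summand by $\E\left[\alpha_{R(p^{(i)})}/R(p^{(i)})\right]=\alpha/m$, with your threshold $\beta_i$ being just a repackaging of the paper's indicator decomposition (\ref{SDx})--(\ref{SD3}). The verification you defer for the data-dependent values $a_i$ (stability of $R(p)=R(p^{(i)})$ and of $\beta_i\leqslant\alpha_{R(p^{(i)})}$ under recomputation of the $a_i$) does in fact go through, and the paper itself treats this point no less tersely---only via the remark that p-values of true nulls contribute to ties with probability zero---so nothing essential is missing from your argument.
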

\begin{proof}
Analogously to the proof of Theorem \ref{deffdr} (\cite{b_h_95}), equation (\ref{firsteqfdr}) and Fubini's Theorem provide
\begin{align}
\label{SDx}\E\left[\frac{\1(p_i\leqslant \alpha_{R(p)})}{\max(R(p),1)}\right]&=\E\left[\frac{\1(p_i\leqslant \alpha_{R(p)})\1(p_i\leqslant \alpha_{R(p^{(i)})})}{R(p^{(i)})}\right]\\
\label{SD2}&=\E\left[\frac{\1(p_i\leqslant \alpha_{R(p^{(i)})})}{R(p^{(i)})}\right]-\E\left[\frac{\1(p_i\in(\alpha_{R(p)},\alpha_{R(p^{(i)})}])}{R(p^{(i)})}\right]\\
\label{SD3}&\leqslant \E\left[\frac{\1(p_i\leqslant \alpha_{R(p^{(i)})})}{R(p^{(i)})}\right]=\frac{\alpha}{m},
\end{align}
where the second term of the right side of (\ref{SD2}) may be positive only for SD tests. Note that for $i\in I_0$ the p-values $p_i$ only contribute with probability zero to possible ties.
The multiplicity $m_0$ of (\ref{SDx}), due to (\ref{deffdr}), implies the result. 
\end{proof}

\begin{remark}
The presented bound for the FDR of the Benjamini and Hochberg SD test is sharp. Consider $p=(U,0,...,0), \ m_0=1, \ I_0=\{1\}$ and let $U$ be uniformly distributed on $[0,1]$. Then, for both, SU and SD, Benjamini Hochberg tests we have  
\begin{align*}
\text{FDR}=\frac{P(U\leqslant \alpha)}{m}=\frac{\alpha}{m}=\frac{m_0 \alpha}{m}.
\end{align*}
\end{remark}
In contrast to SU tests the first coefficient $b_1$ can be slightly improved without loss of the FDR control for SD test under BI models (see also \cite{me} for more general results). Let $\alpha_0$ denote the positive solution of the equation $(1-\alpha)=\exp(-2\alpha), \ \alpha_0 \approx 0.797.$ Consider now $0<\alpha\leqslant \alpha_0.$ Introduce new critical values
\begin{align*}
c_1=1-(1-\alpha)^{\frac{1}{m}} \ \text{ \ and \ }c_i=b_i \text{ \ for \ } 2\leqslant i\leqslant m.
\end{align*}
It is easy to see that $\frac{\alpha}{m}<c_1\leqslant\frac{2\alpha}{m}$ holds.
\begin{lemma}
Consider the SD test with either $(\alpha_i)_{i=1,...,m}=(c_i)_{i=1,...,m}$ or $(\alpha_i)_{i=1,...,m}=(c_{m\hat {F}_m(p_{i:m})})_{i=1,...,m}.$ Under the BI model the following bound is valid
\begin{align*}
\text{FDR}\leqslant 1-(1-\alpha)^{\frac{m_0}{m}}\leqslant \alpha.
\end{align*}
\end{lemma}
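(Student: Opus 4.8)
The two asserted inequalities are of very different character, so I would treat them separately. The right‑hand inequality $1-(1-\alpha)^{m_0/m}\leqslant\alpha$ is immediate: the map $t\mapsto 1-(1-\alpha)^t$ is increasing on $[0,1]$ and $m_0/m\leqslant 1$, so its value at $m_0/m$ is at most its value $\alpha$ at $t=1$. For the left‑hand inequality I would first rewrite the target probabilistically. The defining equation $c_1=1-(1-\alpha)^{1/m}$ gives $1-c_1=(1-\alpha)^{1/m}$, hence
\[(1-c_1)^{m_0}=(1-\alpha)^{m_0/m}\qquad\text{and}\qquad 1-(1-\alpha)^{m_0/m}=1-(1-c_1)^{m_0}.\]
Under (BI) the true p-values are independent with $P(p_i>c_1)\geqslant 1-c_1$, so $P\bigl(\bigcup_{i\in I_0}\{p_i\leqslant c_1\}\bigr)=1-\prod_{i\in I_0}P(p_i>c_1)\leqslant 1-(1-c_1)^{m_0}$. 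Thus it suffices to prove the sharper statement that the FDR is dominated by the probability that at least one true p-value falls below the enlarged first critical value: $\text{FDR}\leqslant P\bigl(\bigcup_{i\in I_0}\{p_i\leqslant c_1\}\bigr)$.

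To attack this bound I would exploit the product structure of (BI)(2): conditioning on $(p_j)_{j\in I_1}$, it is enough to bound the FDR for \emph{arbitrary deterministic} false p-values, the true p-values remaining random and independent. The guiding observation is the least favorable configuration. If the false p-values are so large that they never enter the rejection region (say all equal to $1$), then the $m_1$ largest order statistics exceed every critical value, so $R\leqslant m_0$ and every rejection is a true one; hence $V=R$ and
\[\text{FDR}=P(R\geqslant 1)=P\bigl(p_{1:m}\leqslant c_1\bigr)=P\Bigl(\bigcup_{i\in I_0}\{p_i\leqslant c_1\}\Bigr),\]
which meets the bound with equality. This also clarifies the restriction $\alpha\leqslant\alpha_0$: the condition $(1-\alpha)\geqslant e^{-2\alpha}$ forces $(1-2\alpha/m)^m\leqslant 1-\alpha$, i.e. $c_1\leqslant b_2$, so that $c_1<c_2\leqslant\cdots$ stays a legitimate monotone critical sequence and only the first value is altered. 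The remaining task is then to show that no deterministic false‑null configuration produces a larger FDR than this one.

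For that comparison I would run the step‑down substitution argument of the SD‑Theorem, equations (\ref{SDx})--(\ref{SD3}), coordinatewise in $i\in I_0$, replacing $p_i$ by $0$ and integrating the independent $p_i$ first; the new feature is that $c_1$ contributes the factor $c_1$ (rather than $\alpha/m$) on $\{R(p^{(i)})=1\}$, while $c_i/i=\alpha/m$ for $i\geqslant 2$. The main obstacle is that one cannot discard the step‑down correction term, the second expectation on the right of (\ref{SD2}), as was done in the SD‑Theorem: keeping only the leading terms already yields $\text{FDR}\leqslant \tfrac{\alpha m_0}{m}+(c_1-\tfrac{\alpha}{m})\sum_{i\in I_0}P(R(p^{(i)})=1)$, which overshoots $1-(1-c_1)^{m_0}$. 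Retaining the correction and assembling the coordinates through a telescoping estimate of the type $\sum_{i}q_i\prod_{j\neq i}(1-q_j)\leqslant 1-\prod_i(1-q_i)$, with $q_i=P(p_i\leqslant c_1)$, is what collapses the sum to the Šidák‑type bound. I expect the genuinely hard point to be that, unlike the step‑up case of Theorem \ref{beyek}, the FDR of a step‑down test is \emph{not} monotone in the false p-values, so the least favorable configuration cannot be reached by a simple monotonicity argument; the supremum over deterministic false‑null values must be controlled directly, and this is where I would lean on the general step‑down results of \cite{me}.
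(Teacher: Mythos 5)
There is a genuine gap, and it occurs at your very first reduction. You claim it suffices to prove the ``sharper'' statement
\[
\text{FDR}\leqslant P\Bigl(\bigcup_{i\in I_0}\{p_i\leqslant c_1\}\Bigr),
\]
but this statement is false under the BI model, which only requires true p-values to be stochastically \emph{larger} than uniform. Take $m=2$, $I_0=\{1\}$, $I_1=\{2\}$, $p_2\equiv 0$, and let $p_1$ be uniformly distributed on $(c_1,1]$; this satisfies (BI)(3) because $P(p_1\leqslant x)=\max\bigl(0,(x-c_1)/(1-c_1)\bigr)\leqslant x$. The first SD step always passes since $p_{1:2}=0\leqslant c_1$, and $H_1$ is rejected exactly when $p_1\leqslant c_2=\alpha$, in which case $R=2$ and $V=1$. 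Hence
\[
\text{FDR}=\frac{1}{2}\cdot\frac{\alpha-c_1}{1-c_1}>0=P\Bigl(\bigcup_{i\in I_0}\{p_i\leqslant c_1\}\Bigr).
\]
The same example refutes your least-favorable-configuration claim: conditionally on $p_2=1$ the FDR is $0$, conditionally on $p_2=0$ it is positive, so ``all false p-values equal to $1$'' is \emph{not} least favorable once conservative nulls are allowed. The flaw is structural: when false p-values are small, true nulls get rejected with p-values in $(c_1,c_R]$, and the FDR is then kept small by the divisor $R\geqslant 2$ (the $\alpha/m$ mechanism of the SD-Theorem), not by the probability of a true p-value falling below $c_1$. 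A correct proof must therefore produce the deterministic bound $1-(1-c_1)^{m_0}$, not the configuration-dependent union probability. Moreover, even for uniform true p-values, where your inequality coincides with the lemma, your argument for it is incomplete: the telescoping estimate is only sketched, and you concede at the end that the comparison with the least favorable configuration is the hard point and would be outsourced to \cite{me}.

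The paper's proof avoids the whole difficulty with an elementary two-case split after conditioning on $(p_j)_{j\in I_1}$. Write $f_1=\min_{j\in I_1}p_j$. If $f_1\leqslant c_1$, then any rejection of a true null forces $R\geqslant 2$ (the true p-value and $f_1$ both lie below $c_1\leqslant c_2$), so the substitution argument of the SD-Theorem runs with the ratios $c_j/j=\alpha/m$, $j\geqslant 2$, and gives conditional FDR $\leqslant\frac{m_0}{m}\alpha$; here your correct observation that $\alpha\leqslant\alpha_0$ guarantees $c_1\leqslant b_2$ is what keeps the critical values monotone. If instead $f_1>c_1$, then $R\geqslant 1$ forces $\min_{i\in I_0}p_i\leqslant c_1$, whence conditional FDR $\leqslant P\bigl(\min_{i\in I_0}p_i\leqslant c_1\bigr)\leqslant 1-(1-c_1)^{m_0}$; note that your union-probability bound is valid precisely in this case, and only here. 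Both case bounds are $\leqslant 1-(1-\alpha)^{m_0/m}$ by the elementary inequality $\beta\alpha<1-(1-\alpha)^{\beta}$ for $0<\beta<1$, and integrating over $(p_j)_{j\in I_1}$ finishes the proof. This case analysis on the size of the smallest false p-value is the missing idea in your attempt: it makes the non-monotonicity of the SD FDR, which you correctly identified as the obstacle, irrelevant.
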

\begin{proof}
Recall the elementary inequality $1-(1-\alpha)^{\beta}> \beta\alpha$ for $0<\beta< 1.$ Within the BI asssumption we may condition under the portion $(p_i)_{i\in I_1}$ which is first assumed to be fixed  if $m_1>0.$ Define $f_1=\min(p_i: i\in I_1)$. In the following we consider two cases.
\begin{itemize}
\item[1.] Assume first that $f_1\leqslant c_1$. Then $\frac{V}{R}>0$ implies $R \geqslant 2.$ If we now substitute $p_i $ by $0, \ i\in I_0,$ letting all others p-values unchanged we are in the position of the SD-Theorem and $\text{FDR}\leqslant\frac{m_0}{m}\alpha$.
\item[2.] For the case $f_1>c_1$ we have
$
\text{FDR}\leqslant P(\min\limits_{i\in I_0} p_i\leqslant c_1)\leqslant 1-(1-c_1)^{m_0},
$
which yields the upper bound $1-(1-\alpha)^{\frac{m_0}{m}}.$
Finally, we integrate the conditional FDR with respect to $(p_i)_{i\in I_1}.$
Our inequalities establish the result since
$
\frac{m_0}{m}\alpha\leqslant 1-(1-\alpha)^{\frac{m_0}{m}}\leqslant \alpha.
$
\end{itemize}
The case $m_0=m$ is trivial.
\end{proof}
Note that the inequality $\text{FDR}=1-(1-\alpha)^{\frac{m_0}{m}}>\frac{m_0}{m}\alpha$ occurs for $m_0<m,$ if $p_i>c_m$ holds for all $i\in I_1.$\newline
In contrast to SU tests the FDR is not monotone for SD tests in the sense of Theorem \ref{beyek}.
\begin{example}[FDR is not monotone for SD]
Consider two i.i.d uniformly distributed on $[0,1]$ random variables $p_1$ and $U.$ Choose $m_0=m_1=1$ and $I_0=\{1\}.$ Define the vector of p-values $(p_1,p_2)$ for four cases $p_2\in\{0, \ \alpha U, \ U, \ (1-\alpha)U+\alpha\}.$ Obviously, $0\leqslant \alpha U\leqslant U\leqslant (1-\alpha)U+\alpha $ is valid. For $p_2=0$ or $(1-\alpha)U+\alpha$ the SD and SU tests with BH critical values coincide and we have $FDR=\frac{\alpha}{2}$ for SD tests. However, for the case $p_2=\alpha U$ and $U$ we obtain 
\begin{align*}
\text{FDR}=\frac{\alpha}{2} -\frac{1}{2}P(\{\frac{\alpha}{2}< p_1\leqslant \alpha,\frac{\alpha}{2}\leqslant p_2< \alpha\}),
\end{align*}
due to (\ref{SD2}), which yields $\text{FDR}(p_2=\alpha U)=\frac{3}{8}\alpha$ and $\text{FDR}(p_2=U)=\frac{\alpha}{2}-\frac{\alpha^2}{8}.$
Thus the FDR for SD test is not monotone in $p_2$ since
\begin{align*}
\text{FDR}(p_2=\alpha U)<\text{FDR}(p_2=U)<\text{FDR}(p_2=(1-\alpha)U+\alpha)
\end{align*}
and $\text{FDR}(p_2=0)>\text{FDR}(p_2=\alpha U)$
hold for SD test with BH critical values $\left(b_i\right)_{i=1,...,n}$.
\end{example}

\bibliographystyle{Chicago}

\end{document}